\newfont{\bcb}{msbm10}
\newfont{\matb}{cmbx10}
\newfont{\got}{eufm10}
\newtheorem{theorem}{Theorem}[section]
\newtheorem{lemma}[theorem]{Lemma}
\newtheorem{proposition}[theorem]{Proposition}
\newtheorem{corollary}[theorem]{Corollary}
\theoremstyle{definition}
\theoremstyle{remark}
\newtheorem{remark}[theorem]{Remark}
\numberwithin{equation}{section}
\begin{document}

\title[Definable retractions over separated power series]
      {Definable retractions over complete \\
      fields with separated power series}

\author[Krzysztof Jan Nowak]{Krzysztof Jan Nowak}


\subjclass[2000]{Primary 32P05, 32B20, 14G22; Secondary 14P15,
32S45, 03C10.}

\keywords{complete fields with separated power series, definable
retractions, desingularization of terms, closedness theorem,
embedded resolution of singularities, simple normal crossing
divisors, quasi-rational subdomains, extending continuous
definable functions}

\date{}

\begin{abstract}
Let $K$ be a complete non-Archimedean field $K$ with separated
power series, treated in the analytic Denef--Pas language. We
prove the existence of definable retractions onto an arbitrary
closed definable subset of $K^{n}$, whereby definable
non-Archimedean versions of the extension theorems by
Tietze--Urysohn and Dugundji follow directly. We reduce the
problem to the case of a simple normal crossing divisor, relying
on our closedness theorem and desingularization of terms. The
latter result is established by means of the following tools:
elimination of valued field quantifiers (due to
Cluckers--Lipshitz--Robinson), embedded resolution of
singularities by blowing up (due to Bierstone--Milman or Temkin),
the technique of quasi-rational subdomains (due to
Lipshitz--Robinson) and our closedness theorem.
\end{abstract}

\maketitle

\section{Main result}

Fix a complete, rank one valued field $K$ of equicharacteristic
zero (not necessarily algebraically closed). Denote by $v$,
$\Gamma = \Gamma_{K}$, $K^{\circ}$, $K^{\circ \circ}$ and
$\widetilde{K}$ the valuation, its value group, the valuation
ring, maximal ideal and residue field, respectively. The
multiplicative norm corresponding to $v$ will be denoted by $|
\cdot |$. The $K$-topology on $K^{n}$ is the one induced by the
valuation $v$. The word ''definable'' usually means ''definable
with parameters''.

\vspace{1ex}

In the paper, we shall deal with a separated Weierstrass system
$$ \mathcal{S} := \{ S_{m,n}^{\circ} \} = \{ S_{m,n}^{\circ}(E,K) \} $$
from the paper~\cite{L-R-0} (see
also~\cite[Ex.~4.4.(3)]{C-Lip-0}). Any algebraic extension $L$ of
a complete field containing $K$ carries separated analytic
$\mathcal{S}$-structure. It is the collection $\{ \sigma_{m,n} \}$
of homomorphisms from $S_{m,n}^{\circ}$ to the ring of
$L^{\circ}$-valued functions on $(L^{\circ})^{m} \times (L^{\circ
\circ})^{n}$, which are canonically induced by the inclusion $K
\subset L$. These fields with analytic structure are treated in an
analytic Denef--Pas language $\mathcal{L}$. It is the two sorted,
semialgebraic language $\mathcal{L}_{Hen}$ (with the main, valued
field sort $K$ and the auxiliary $RV$-sort), augmented on the
valued field sort $K$ by the multiplicative inverse $1/x$ (with
$1/0 :=0$) and the names of all functions of the system
$\mathcal{S}$, together with the induced language on the auxiliary
sort $RV$ (cf.~\cite[Section~6.2]{C-Lip-0} and
also~\cite[Section~2]{Now-3}). Power series $f \in
S_{m,n}^{\circ}$ are construed via the analytic
$\mathcal{A}$-structure on their natural domains and as zero
outside them. Note that in the equicharacteristic case, the
induced language on the sort $RV$ coincides with the semialgebraic
inclusion language.

\vspace{1ex}

Consider a quasi-affinoid algebra $A$, i.e.\ $A = S_{m,n}/I$ for
an ideal $I$ of $S_{m,n}$. The space $X =\mathrm{Max}\, (A)$ of
the maximal ideals of $A$ may be regarded as a scheme with the
Zariski topology and as a quasi-affinoid variety $\mathrm{Sp}\,
(A)$, i.e.\ the locally G-ringed space $(X,\mathcal{O}_{X})$ with
the rigid G-topology; $\mathrm{Max}\, (A)$ carries also the
canonical topology induced by the absolute value. Every algebraic
$K$-variety $V$ can be endowed with an analytic structure of a
rigid analytic variety, called the analytification of $V$. The
analytification of a projective $K$-variety is a quasi-compact
rigid analytic variety.

\vspace{1ex}

In this paper, we are mainly interested in the $K$-rational points
$X(K)$ of rigid analytic varieties $X$ with the canonical
topology. For simplicity of notation, we shall usually write $X$
instead of $X(K)$ when no confusion can arise. The main purpose is
the following theorem on the existence of $\mathcal{L}$-definable
retractions onto an arbitrary closed $\mathcal{L}$-definable
subset, whereby definable non-Archimedean versions of the
extension theorems by Tietze--Urysohn and Dugundji follow
directly.


\begin{theorem}\label{main-2}
Let $Z \subset W$ be closed subvarieties of the unit balls
$(K^{\circ})^{N}$ or $(K^{\circ \circ})^{N}$, of the projective
space $\mathbb{P}^{n}(K)$ or of the products $(K^{\circ})^{N}
\times \mathbb{P}^{n}(K)$ or $(K^{\circ \circ})^{N} \times
\mathbb{P}^{n}(K)$, and let $X:=W \setminus Z$. Then, for each
closed $\mathcal{L}$-definable subset $A$ of $X$, there exists an
$\mathcal{L}$-definable retraction $X \to A$.
\end{theorem}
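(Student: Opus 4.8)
\emph{Plan of the proof.} I would first use the closedness theorem to move freely between local and global statements. Since $\mathbb{P}^{n}(K)$ is definably compact and is covered by finitely many affine charts $\cong K^{n}$, and the unit balls are clopen in $K^{N}$, the problem reduces to finitely many affine models (for products one takes $(K^{\circ})^{N}\times K^{n}$, etc.); the closedness theorem guarantees that closed sets and continuous maps produced chart by chart assemble into \emph{globally} closed sets and \emph{globally} continuous maps, and, playing the role of properness, it lets one control the behaviour of piecewise-defined maps along the boundaries of the pieces. One is thus reduced to the following local task: $W$ a smooth affine $K$-variety, $Z\subset W$ a closed subvariety, $X=W\setminus Z$, and $A\subseteq X$ closed $\mathcal{L}$-definable; construct a definable retraction $X\to A$. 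Note that $A$ need not be closed in $W$ — its frontier is swallowed by $Z$, and the retraction is permitted to degenerate as one approaches $Z$.

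The core of the argument is a reduction to a \emph{simple normal crossing model}. By the Cluckers--Lipshitz--Robinson elimination of valued field quantifiers, $A$ and $Z$ are, after a finite $\mathcal{L}$-definable partition of $X$, cut out by conditions of the form $v(f_{i})\leq v(g_{i})$ on $\mathcal{L}$-terms (and their negations); feeding these terms into the desingularization of terms, combined with embedded resolution of singularities by blowing up (Bierstone--Milman / Temkin), yields a finite partition of $X$ into $\mathcal{L}$-definable pieces, each carrying an $\mathcal{L}$-definable homeomorphism onto a semialgebraic subset of some $K^{d}\times\mathbb{P}^{m}(K)$ in which $A$ and $Z$ become unions of coordinate subspaces, i.e.\ traces of simple normal crossing divisors. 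Here the closedness theorem is invoked a second time: to refine the partition so that the closures of the pieces and the traces of $A$ on them are mutually compatible, and to transport the construction across the blowing-up maps.

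It remains to treat the model case: $A=\bigcup_{j}\{x_{i}=0:i\in S_{j}\}$, a finite union of coordinate subspaces of $(K^{\circ})^{N}$, with $Z$ and the $\mathbb{P}^{n}$-/product-variants handled analogously. I would write the retraction down explicitly: send $x$ to the point obtained by setting $x_{i_{0}}=0$, where $i_{0}$ is the least index realizing $\min\{|x_{i}|:i\in S_{j}\}$ over those $j$ for which this minimum is largest — informally, collapse along the coordinate closest to zero inside the nearest subspace, breaking ties by index. That this is a retraction is immediate; continuity is where the ultrametric inequality does the work: off the deeper coordinate strata, the loci on which the formula follows a fixed rule are clopen, so nothing need be checked, while along a deeper stratum all competing coordinates are simultaneously small, so every admissible value of the map lies near the point itself. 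An induction on $\dim W$ — peeling off one coordinate subspace and recursing inside it — upgrades this to the full union $A$ and, run in parallel with the resolution, makes the local retractions match across charts.

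The main obstacle I anticipate is precisely this continuity bookkeeping in the gluing — across the charts of the resolution and across the pieces of the partition — since over a non--locally compact valued field one cannot fall back on ordinary compactness; this is exactly where the closedness theorem is indispensable, supplying the missing properness so that the limits of the piecewise-defined retraction along the boundaries are controlled and the patched map is genuinely continuous.
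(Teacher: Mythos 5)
Your overall route --- valued-field quantifier elimination, desingularization of $\mathcal{L}$-terms combined with embedded resolution to reach a simultaneous simple normal crossing model, an explicit retraction in that model, and descent/gluing controlled by the closedness theorem together with an induction on the ambient dimension --- is indeed the paper's route. The genuine gap is in the descent, which you compress into ``transport the construction across the blowing-up maps.'' A multi-blowup $\sigma\colon\widetilde X\to X$ is not injective over its center, so a definable retraction upstairs does not simply push down: what the closedness theorem buys (namely, that $\sigma$ restricted to $K$-points is definably closed) is that a retraction of $\widetilde X$ onto $\sigma^{-1}(A)\cup E$, with $E$ the exceptional divisor, descends to a retraction of $X$ onto $A\cup S$, where $S$ is the locus over which $\sigma$ fails to be bijective --- and upstairs one can in any case only reach $\sigma^{-1}(A)\cup E$, since $A$ becomes normal crossing only up to the exceptional locus. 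Removing the extra piece $S$ from the target requires a separate retraction $S\to A\cap S$, and it is exactly here, not in the model case, that the induction on dimension must be run ($S$ sits inside a lower-dimensional subvariety; this is the content of the paper's descent lemma, its corollaries, and Lemma~3.1). Your induction ``peels off coordinate subspaces'' inside the model, so the recursion is set up on the wrong object and the argument does not close. A related omission: the desingularization of terms interleaves blow-ups with partitions into $R$-subdomains, and the resulting composite maps are \emph{not} definably closed; the paper must therefore carry the accumulated degeneracy locus $V(\phi_{k})$ inside the target of every intermediate retraction and strip it off stage by stage, rather than partitioning first and descending afterwards as you propose.

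There are also two concrete problems in your model case. The formula is not a map into $A$: for a subspace $\{x_{i}=0:\ i\in S_{j}\}$ of codimension $\ge 2$ you zero out a single coordinate; and choosing $j$ so that $\min\{|x_{i}|:i\in S_{j}\}$ is largest selects the \emph{farthest} stratum, since the ultrametric distance to that subspace is $\max\{|x_{i}|:i\in S_{j}\}$ and one must minimize it, zeroing \emph{all} coordinates indexed by the chosen $S_{j}$. The continuity mechanism you invoke --- comparison loci of absolute values are clopen away from the deep strata --- is correct and is what makes the corrected rule work. Finally, the normal crossing model of $A$ delivered by quantifier elimination is not a bare union of coordinate subspaces but a union of intersections of such subspaces with clopen sets coming from the $RV$-conditions $(\mathrm{rv}\,\omega_{1},\ldots,\mathrm{rv}\,\omega_{s})\in B$; their closures pick up lower-dimensional strata, and this is precisely where the paper falls back on Proposition~3.3 of its algebraic predecessor rather than on a one-line formula.
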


We immediately obtain

\begin{corollary}\label{main}
For each closed $\mathcal{L}$-definable subset $A$ of $K^{n}$,
there exists an $\mathcal{L}$-definable retraction $K^{n} \to A$.
\end{corollary}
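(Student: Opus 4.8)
The plan is to realize $K^{n}$ as one of the ambient spaces $X = W \setminus Z$ permitted in Theorem~\ref{main-2} and then to transport the retraction back by conjugation. Concretely, consider the standard affine chart
\[
  \phi \colon K^{n} \longrightarrow \mathbb{P}^{n}(K), \qquad
  (x_{1},\dots,x_{n}) \longmapsto [\, 1 : x_{1} : \cdots : x_{n}\, ].
\]
I claim $\phi$ is an $\mathcal{L}$-definable homeomorphism of $K^{n}$ onto $X := \mathbb{P}^{n}(K) \setminus H$, where $H = \{\, x_{0} = 0\, \}$ is the hyperplane at infinity. Indeed, its inverse $[\, x_{0} : \cdots : x_{n}\, ] \mapsto (x_{1}/x_{0}, \dots, x_{n}/x_{0})$ is $\mathcal{L}$-definable because the multiplicative inverse $1/x$ belongs to the language $\mathcal{L}$, and $\phi$ is a homeomorphism for the canonical topology since the canonical topology on $\mathbb{P}^{n}(K)$ is by construction glued from its affine charts (so $X$ is open and carries the chart topology). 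Hence $H$ is a closed, indeed linear, subvariety of $W := \mathbb{P}^{n}(K)$, and $X = W \setminus Z$ with $Z := H$ is precisely of the form allowed in Theorem~\ref{main-2}.

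Now let $A \subseteq K^{n}$ be a closed $\mathcal{L}$-definable subset. Since $\phi$ is an $\mathcal{L}$-definable homeomorphism onto $X$, the image $A' := \phi(A)$ is a closed $\mathcal{L}$-definable subset of $X$. Applying Theorem~\ref{main-2} to the pair $Z = H \subseteq W = \mathbb{P}^{n}(K)$ yields an $\mathcal{L}$-definable retraction $\rho \colon X \to A'$. Then
\[
  r := \phi^{-1} \circ \rho \circ \phi \colon K^{n} \longrightarrow A
\]
is $\mathcal{L}$-definable and continuous, and restricts to the identity on $A$ because $\rho$ does on $A'$ and $\phi(A) = A'$; thus $r$ is the desired $\mathcal{L}$-definable retraction. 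No real obstacle arises in this deduction: the only point to observe is that $K^{n}$ is $\mathcal{L}$-definably homeomorphic to the complement of a hyperplane in $\mathbb{P}^{n}(K)$, and all the analytic and geometric content has already been absorbed into Theorem~\ref{main-2}. (Alternatively one could use the componentwise embedding $K^{n} \hookrightarrow \mathbb{P}^{1}(K)^{n}$, but staying within $\mathbb{P}^{n}(K)$ keeps us literally inside the list of ambient varieties in the statement of Theorem~\ref{main-2}.)
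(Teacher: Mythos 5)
Your proposal is correct and matches the paper's intent: the corollary is stated as an immediate consequence of Theorem~\ref{main-2}, obtained exactly by identifying $K^{n}$ with $\mathbb{P}^{n}(K)\setminus H$ for $H$ the hyperplane at infinity, i.e.\ with $W\setminus Z$ for $W=\mathbb{P}^{n}(K)$ and $Z=H$. Your explicit verification that the chart map is an $\mathcal{L}$-definable homeomorphism (using that $1/x$ is in the language) just spells out what the paper leaves implicit.
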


Using embedded resolution of singularities, we reduce the problem
of definable retractions onto a closed $\mathcal{L}$-definable
subset $A$ of $(K^{\circ})^{N}$ to that onto a simple normal
crossing divisor. This is possible via our closedness theorem
(see.~\cite{Now-3} for the analytic and ~\cite{Now-1,Now-2} for
the algebraic versions) and desingularization of
$\mathcal{L}$-terms established in Section~2. The basic tools
applied in our approach are the following:

$\bullet$ elimination of valued field quantifiers in the
      Denef--Pas language $\mathcal{L}$
      (cf.~\cite[Theorem~4.2]{C-Lip-R}
      and~\cite[Theorem~6.3.7]{C-Lip-0});

$\bullet$ the analytic closedness theorem
(cf.~\cite[Theorem~1.1]{Now-3}) which, in particular, enables
      application of resolution of singularities to problems concerning the canonical
      topology;

$\bullet$ embedded resolution of singularities by blowing up for
      quasi-compact rigid analytic spaces (cf.~\cite{BM,Tem-2});

$\bullet$ and the technique of quasi-rational subdomains
      (cf.~\cite{L-R-0}).

\vspace{1ex}

\section{Desingularization of terms}

Consider the ring
$$ S_{m,n} := K \otimes_{K^{\circ}} S_{m,n}^{\circ}, \ \ m,n
   \in \mathbb{N}, $$
of $K$-valued series power series. For $f = \sum a_{\mu,\nu}
\xi^{\mu} \rho^{\nu} \in S_{m,n}$, the Gauss norm
$$ \| f \| = \sup_{\mu,\nu} |a_{\mu,\nu}| = \max_{\mu,\nu}
   |a_{\mu,\nu}| $$
is attained, and one has
$$ S_{m,n}^{\circ} := \{ f \in S_{m,n}: \, \| f \| \leq 1 \} =  $$
$$ \{ f \in S_{m,n}: \, |f^{\sigma}(a,b)| \leq 1 \ \ \text{for all} \
   (a,b) \in (K_{alg}^{\circ})^{m} \times (K_{alg}^{\circ \circ})^{n} \}. $$
Lipshitz--Robinson~\cite{L-R-0} proved that the rings $S_{m,n}$
are regular, excellent, unique factorization domains and satisfy
the Nullstellensatz. Further, they developed a theory of
quasi-affinoid algebras over the rings of separated power series,
including generalized rings of fractions and rational and
quasi-affinoid subdomains. This was done in analogy to the
classical theory of affinoid algebras over the Tate rings of
strictly convergent power series (cf.~\cite{B-G-R}). We still need
the following

\begin{proposition}\label{prop-part}
Let $A$ be a quasi-affinoid algebra, $a,b \in A$ be two elements
such that, at each point $x \in \mathrm{Max}\, A$, one has $aA_{x}
\subset bA_{x}$ or $bA_{x} \subset aA_{x}$. Then there is a
partition of $X = \mathrm{Max}\, A$ into quasi-rational subdomains
$X_{i}$ such that, on each $X_{i}$, $a$ is divisible by $b$ or $b$
is divisible by $a$.
\end{proposition}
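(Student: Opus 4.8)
The plan is to manufacture the partition from a single partition of unity extracted from the two ``conductor'' ideals of $a$ and $b$, and then to read off the two divisibilities on the corresponding Laurent-type subdomains.

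First I would introduce the colon ideals $I := (bA : a) = \{ c \in A : ca \in bA \}$ and $J := (aA : b) = \{ c \in A : cb \in aA \}$ of $A$. Because $aA$ and $bA$ are finitely generated, formation of $I$ and $J$ commutes with localization, so for a point $x \in \mathrm{Max}\, A$ the hypothesis ``$aA_{x} \subset bA_{x}$ or $bA_{x} \subset aA_{x}$'' says exactly that $I_{x} = A_{x}$ or $J_{x} = A_{x}$; in either case $(I+J)_{x} = A_{x}$. Since this holds at every maximal ideal and a proper ideal of the (nonzero) algebra $A$ is contained in some maximal ideal, we get $I + J = A$. Hence there are $u \in I$ and $w \in J$ with $u + w = 1$, and we may fix $\alpha, \beta \in A$ with $ua = b\alpha$ and $wb = a\beta$. (If ``$A_{x}$'' in the statement is meant to be the analytic stalk $\mathcal{O}_{X,x}$ rather than the algebraic local ring, one first descends to the latter by faithful flatness of $A_{\mathfrak{m}_{x}} \to \mathcal{O}_{X,x}$.)

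Next I would put $X_{1} := \{ x \in X : |u(x)| \geq 1 \}$ and $X_{2} := \{ x \in X : |u(x)| < 1 \}$. These two sets partition $X$, and both are quasi-rational subdomains: $X_{1}$ is a Laurent subdomain (on which $u$ is invertible), and the strict locus $X_{2}$ is quasi-rational by the calculus of quasi-rational subdomains developed by Lipshitz--Robinson~\cite{L-R-0} (on it $w = 1 - u$ is invertible, with inverse the convergent series $\sum_{k \geq 0} u^{k}$, since $|u(x)| < 1$ throughout). On $X_{1}$ we then have $a = u^{-1}(ua) = b\,(u^{-1}\alpha) \in b\,\mathcal{O}(X_{1})$, so $b$ divides $a$ on $X_{1}$; on $X_{2}$ we have $b = w^{-1}(wb) = a\,(w^{-1}\beta) \in a\,\mathcal{O}(X_{2})$, so $a$ divides $b$ on $X_{2}$. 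This is the required partition, indeed with only two members.

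The only genuinely delicate point, and the one I expect to be the main obstacle, is the interface with the Lipshitz--Robinson formalism: checking that $\{ |u(x)| \geq 1 \}$ carries the expected Laurent coordinate ring and, above all, that the strict-inequality locus $\{ |u(x)| < 1 \}$ is a bona fide quasi-rational subdomain of $\mathrm{Max}\, A$ with a well-behaved structure sheaf. The remaining ingredients --- the conductor computation, the appeal to the Nullstellensatz, and the two explicit one-line divisions --- are routine.
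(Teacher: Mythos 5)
Your argument is correct, and it reaches the conclusion by a genuinely different route than the paper, even though both start from the same two ideals: your colon ideals $I=(bA:a)$ and $J=(aA:b)$ are precisely the annihilators $\mathrm{Ann}((a,b)/(b))$ and $\mathrm{Ann}((a,b)/(a))$ that the paper introduces, and in both cases the hypothesis is translated into the disjointness of $V(I)$ and $V(J)$, equivalently $I+J=A$. The divergence is in how the partition is manufactured. The paper separates the two disjoint Zariski-closed sets by means of the closedness theorem: it picks $\epsilon>0$ so that the loci $\{|f(x)|\le\epsilon\}$ and $\{|g(x)|\le\epsilon\}$ (for generators $f$, $g$ of the two annihilators) are disjoint quasi-rational subdomains, and then uses stability of quasi-rational subdomains under intersection and complementation (the complement being a finite disjoint union of such) to produce the partition, the Nullstellensatz converting the avoidance of $V(I)$ or $V(J)$ on each piece into divisibility in its coordinate ring. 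You instead extract a partition of unity $u+w=1$ with $u\in I$, $w\in J$, and split $X$ into $\{|u|\ge 1\}$ and $\{|u|<1\}$, on which $u$, respectively $w=1-u$, is a unit, so that $ua=b\alpha$ and $wb=a\beta$ give the two divisibilities outright. Your version buys an explicit two-element partition and dispenses entirely with the closedness theorem and the complementation lemma, at the modest cost of the checks you yourself single out; these are unproblematic in the Lipshitz--Robinson formalism: $\{|u|\ge 1\}$ is the rational subdomain attached to $A\langle 1/u\rangle$ (note $(1,u)=A$), and $\{|u|<1\}$ is the quasi-rational subdomain attached to $A[[u]]_{s}$, in which $1-u$ is inverted by the image of the geometric series $\sum_{k}\rho^{k}$, a unit inverse of $1-\rho$ in $S_{0,1}$, exactly as in Remark~\ref{composite}; the paper itself admits strict-inequality loci as quasi-rational subdomains in Corollary~\ref{NC-part}. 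One small point: the only form of the Nullstellensatz you actually need is that every maximal ideal of $A$ belongs to $\mathrm{Max}\,A$, so that local comaximality at all points of $\mathrm{Max}\,A$ forces $I+J=A$; this is the same place where the paper's proof quietly uses it.
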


\begin{proof}
Put
$$ V_{a}:= V(\mathrm{Supp}\,((a,b)/(a))) = V(\mathrm{Ann}\,(a,b)/(a))) $$
and
$$ V_{b}:= V(\mathrm{Supp}\,((a,b)/(b))) = V(\mathrm{Ann}\,((a,b)/(b)). $$
By the assumption, $V_{1} \cap V_{2} = \emptyset$. Take generators
$f=(f_{1},\ldots,f_{r})$ and $g =(g_{1},\ldots,g_{s})$ of
$\mathrm{Ann}\,((a,b)/(a))$ and $\mathrm{Ann}\,((a,b)/(b))$,
respectively. It follows from the closedness theorem that there is
an $\epsilon \in |K|$, $\epsilon
>0$, such that the two quasi-rational subdomains
$$ X_{a} := \{ x \in \mathrm{Max}\, A: \ |f(x) \leq \epsilon \} \
  \ \text{and} \ \ X_{a} := \{ x \in \mathrm{Max}\, A: \ |f(x) \leq \epsilon \} $$
are disjoint. But the intersection of two quasi-rational
subdomains is a quasi-rational subdomain and the complement of a
quasi-rational subdomain is a finite disjoint union of
quasi-rational subdomains. Hence the assertion follows.
\end{proof}

Let $\lhd$ stands for $<$, $>$ or $=$. We immediately obtain

\begin{corollary}\label{NC-part}
On each subdomain $X_{i}$, the subset
$$ \{ x \in X_{i}: |a(x)| \lhd |b(x)|, \ b(x) \neq 0 \}
$$
is the trace on the subset $\{ x \in X_{i}: b(x) \neq 0 \}$ of the
quasi-rational subdomain of $X_{i}$ (thus being an $R$-subdomain
of $X$)
$$ \left\{ x \in X_{i}: \left| \frac{a(x)}{b(x)} \right| \lhd 1 \right\} \ \ \text{or}
   \ \ \left\{ x \in X_{i}: 1  \lhd \left| \frac{b(x)}{a(x)} \right| \right\}, $$
according as $a$ is divisible by $b$ or $b$ is divisible by $a$ on
$X_{i}$.  \hspace*{\fill} $\Box$
\end{corollary}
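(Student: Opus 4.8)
The plan is to read the corollary off from Proposition~\ref{prop-part} together with the elementary stability properties of (quasi-)rational and $R$-subdomains established in~\cite{L-R-0}. Fix one of the subdomains $X_{i}$ produced by the proposition, let $A_{i}$ denote its quasi-affinoid algebra, and view $a,b$ as elements of $A_{i}$, one of which divides the other on $X_{i}$. By symmetry it suffices to treat the case in which $a$ is divisible by $b$ on $X_{i}$: I choose $c\in A_{i}$ with $a=b\,c$, so that $c$ restricts to the ratio $a/b$ on $\{x\in X_{i}:b(x)\neq 0\}$. This replacement of the \emph{function} $a/b$, defined a priori only where $b\neq 0$, by the genuine \emph{element} $c\in A_{i}$ is exactly what the divisibility supplied by Proposition~\ref{prop-part} is used for, and it is what turns the comparison sets below into honest subdomains.

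The next step is the pointwise translation. For $x\in X_{i}$ with $b(x)\neq 0$ one has $a(x)=b(x)\,c(x)$, hence $|a(x)|=|b(x)|\cdot|c(x)|$, so $|a(x)|\lhd|b(x)|$ holds precisely when $|c(x)|\lhd 1$. Therefore
$$\{\,x\in X_{i}:|a(x)|\lhd|b(x)|,\ b(x)\neq 0\,\}\;=\;\{\,x\in X_{i}:|c(x)|\lhd 1\,\}\,\cap\,\{\,x\in X_{i}:b(x)\neq 0\,\},$$
exhibiting the left-hand side as the trace on $\{b\neq 0\}$ of the set $\{\,|c|\lhd 1\,\}=\{\,|a/b|\lhd 1\,\}$. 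One genuinely obtains a trace and not a subdomain: in general $\{|c|\lhd 1\}$ is not contained in $\{b\neq 0\}$, because a point of the zero locus $V(b)\cap X_{i}$ may well satisfy $|c|\lhd 1$. The case in which $b$ is divisible by $a$ on $X_{i}$ is handled identically with $d\in A_{i}$, $b=a\,d$ in place of $c$: for $b(x)\neq 0$ one gets $|a(x)|\lhd|b(x)|\iff 1\lhd|d(x)|$, which yields the second normal form $\{\,1\lhd|b/a|\,\}$ in the statement.

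It remains to recognise the comparison sets as subdomains. For any $g\in A_{i}$, each of $\{|g|\leq 1\}$, $\{|g|<1\}$, $\{|g|\geq 1\}$, $\{|g|>1\}$ and $\{|g|=1\}$ is a quasi-rational subdomain of $X_{i}$ by the basic theory of~\cite{L-R-0} — the strict inequalities and the lower-bound variants $\{|g|\geq 1\}$, $\{|g|>1\}$ being exactly what the passage to the separated (quasi-affinoid) setting makes available, and $\{|g|=1\}$ being $\{|g|\leq 1\}\cap\{|g|\geq 1\}$, a finite intersection of quasi-rational subdomains and hence again quasi-rational. Taking $g=c$ (resp.\ $g=d$) identifies $\{|c|\lhd 1\}$ (resp.\ $\{1\lhd|d|\}$) as a quasi-rational subdomain of $X_{i}$; and since $X_{i}$ is itself a quasi-rational subdomain of $X$, while a quasi-rational subdomain of a quasi-rational subdomain of $X$ is an $R$-subdomain of $X$ by the transitivity of these subdomain constructions in~\cite{L-R-0}, the parenthetical assertion, and with it the corollary, follows. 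There is no genuine obstacle here, since all the substance lies in Proposition~\ref{prop-part}; the only thing that wants care is the bookkeeping — matching the three relations $<,>,=$ against the two divisibility alternatives and the two normal forms, keeping straight why the result is a trace rather than a subdomain, and invoking the correct intersection- and composition-stability facts for quasi-rational and $R$-subdomains from~\cite{L-R-0}.
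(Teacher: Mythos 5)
Your argument is correct and is exactly the one the paper intends: the corollary is stated without proof as an immediate consequence of Proposition~\ref{prop-part}, and your write-up simply fills in the obvious details (replacing $a/b$ by the genuine element $c$ with $a=bc$ supplied by the divisibility, translating $|a|\lhd|b|$ into $|c|\lhd 1$ off $V(b)$, and invoking the stability of quasi-rational and $R$-subdomains under intersection and composition from~\cite{L-R-0}). Nothing further is needed.
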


\begin{remark}\label{rem-NC-part}
Corollary~\ref{NC-part} can be, of course, generalized to the case
of several pairs $a_{i}(x)$, $b_{i}(x)$ satisfying the above
divisibility condition.
\end{remark}

Similarly, the following strengthening of
Proposition~\ref{prop-part} can be proven.

\begin{proposition}
Let $A$ be a quasi-affinoid algebra, $a_{1},\ldots,a_{k} \in A$ be
$k$ elements such that, at each point $x \in \mathrm{Max}\, A$,
the ideals $a_{i}A_{x}$ are linearly ordered by inclusion. Then
there is a partition of $X = \mathrm{Max}\, A$ into quasi-rational
subdomains $X_{i}$ such that the functions $a_{1}, \ldots,a_{k}$
are linearly ordered by divisibility relation on each $X_{i}$.
\hspace*{\fill} $\Box$
\end{proposition}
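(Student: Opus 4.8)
The plan is to reduce to the two‑element case already settled in Proposition~\ref{prop-part} and then pass to a common refinement. First, for each pair of indices $1 \le i < j \le k$, the hypothesis gives, at every point $x \in \mathrm{Max}\, A$, either $a_{i}A_{x} \subset a_{j}A_{x}$ or $a_{j}A_{x} \subset a_{i}A_{x}$; hence Proposition~\ref{prop-part}, applied to the pair $a_{i},a_{j}$, yields a finite partition $\mathcal{P}_{ij}$ of $X = \mathrm{Max}\, A$ into quasi-rational subdomains on each member of which $a_{i}$ is divisible by $a_{j}$ or $a_{j}$ is divisible by $a_{i}$.

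Next, I would form the common refinement $\mathcal{P}$ of the finitely many partitions $\mathcal{P}_{ij}$, $1 \le i < j \le k$, whose members are the non-empty intersections $\bigcap_{i<j} Y_{ij}$ with $Y_{ij}$ running over $\mathcal{P}_{ij}$. Since a finite intersection of quasi-rational subdomains is again a quasi-rational subdomain (as recalled in the proof of Proposition~\ref{prop-part}), $\mathcal{P}$ is a finite partition of $X$ into quasi-rational subdomains; and on each member $Y$ of $\mathcal{P}$ we have, by construction, $a_{i} \mid a_{j}$ or $a_{j} \mid a_{i}$ for every pair $i<j$, where divisibility is taken in the quasi-affinoid algebra associated with $Y$. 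Because the divisibility relation is transitive, pairwise comparability of $a_{1},\ldots,a_{k}$ on $Y$ means precisely that these functions are linearly ordered by divisibility on $Y$ (a total preorder, which becomes a genuine linear order upon passing to classes of associates). Setting $\{X_{i}\}$ equal to the members of $\mathcal{P}$ gives the assertion.

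The step requiring the most care is the passage to subdomains: one must know that a finite intersection of quasi-rational subdomains is a quasi-rational subdomain, and that if $x$ lies in a subdomain $Y$ then the local ring of $Y$ at $x$ agrees with $A_{x}$, so that both the hypothesis on the ideals $a_{i}A_{x}$ and the conclusion of Proposition~\ref{prop-part} transfer verbatim from $X$ to $Y$. Both facts belong to the Lipshitz--Robinson theory of quasi-affinoid subdomains~\cite{L-R-0}; granting them, the remainder is a purely combinatorial refinement of partitions.
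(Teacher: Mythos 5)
Your proposal is correct and carries out exactly what the paper intends: the paper omits the proof with the remark that it goes ``similarly'' to Proposition~\ref{prop-part}, and your reduction to the pairwise case followed by a common refinement of the resulting finite partitions (using that finite intersections of quasi-rational subdomains are again quasi-rational) is precisely that argument. One minor remark: you do not actually need the local ring of a subdomain at $x$ to agree with $A_{x}$, since you apply Proposition~\ref{prop-part} on all of $X$ and only need that divisibility persists under the restriction homomorphism to a smaller quasi-rational subdomain.
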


\vspace{1ex}

The desingularization of $\mathcal{L}$-terms provided in this
paper will be based on resolution of singularities by blowing up
for quasi-affinoid varieties or, more generally, quasi-compact
rigid analytic varieties. The canonical desingularization by
Bierstone--Milman~\cite{BM} by finite sequences of blow-ups
(multi-blowups for short) along admissible smooth centers applies
directly to quasi-compact rigid analytic spaces $X$ over a
complete field of characteristic zero with non trivial absolute
value; and to restrictions of such spaces to their $K$-rational
points $X(K)$ as well. And so does the functorial
desingularization for quasi-excellent schemes by Temkin
(cf.~\cite[Theorem~5.2.2]{Tem-1} and
\cite[Theorem~~1.1.13]{Tem-2}) via analytification and
functoriality.

\vspace{1ex}

Consider a quasi-affinoid variety $X = \mathrm{Max}\, A$ and an
ideal $I$ of $A$ generated by $(n+1)$ analytic functions on $X$.
Then the blow-up of $X$ along the subvariety $V(I)$ induced by $I$
is a quasi-compact rigid analytic subvariety of the projective
variety
$$ \mathbb{P}^{n}(X) = X \times_{K} \mathbb{P}^{n}(K); $$
the projective rigid analytic space $\mathbb{P}^{n}(K)$ is the
analytification of the projective space over $K$
(cf.~\cite[Example~9.3.4/3]{B-G-R} for a natural construction in
the classical affinoid case). By convention, we regard identity
map as a multi-blowup.

\vspace{1ex}

\begin{proposition}\label{ind}
Let $X$ be a quasi-compact irreducible rigid analytic variety,
$f_{1},\ldots,f_{p},g_{1} \neq 0, \ldots,g_{q} \neq 0$ be analytic
functions on $X$ and $P$ a polynomial in $p+q$ indeterminates with
coefficients from $K$. Then there exist an admissible multi-blowup
$\sigma: \widetilde{X} \to X$ and a finite partition of
$\widetilde{X}$ into R-subdomains $\widetilde{X}_{i}$ of
$\widetilde{X}$ such that, on each subset
$$ \{ x \in \widetilde{X}_{i}:  g_{1}(x) \cdot \ldots \cdot g_{q}(x) \neq 0 \}, $$
the pull-back
$$ P(f_{1},\ldots,f_{p},1/g_{1}, \ldots, 1/g_{q}) \circ \sigma
$$
is the restriction of an analytic function $\omega$ on
$\widetilde{X}_{i}$ or its multiplicative inverse and, moreover,
we have $|\omega| \lhd 1$, where $\lhd$ is either $<$, $>$ or $=$.
\end{proposition}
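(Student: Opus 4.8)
The statement asks us to: (1) take a polynomial expression in analytic functions and inverses of nonvanishing analytic functions, (2) after a blow-up and partition into R-subdomains, write the pullback as an analytic function or its inverse, with the absolute value comparable to 1. Let me think about the approach.

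First, I need to handle the inverses. The expression $P(f_1, \ldots, f_p, 1/g_1, \ldots, 1/g_q)$, when we clear denominators, becomes a rational function $N/D$ where $N, D$ are polynomials in the $f_i, g_j$ — hence analytic functions on $X$ (on the locus where the $g_j$ don't vanish, but actually they're globally analytic since $g_j$ are). Wait, $D = \prod g_j^{(\text{something})}$ essentially, or more precisely $D$ is a product of powers of the $g_j$. So $D$ is analytic and nonvanishing on $\{g_1 \cdots g_q \neq 0\}$. And $N$ is analytic on all of $X$.

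So the core reduces to: given two analytic functions $N$ and $D$ (with $D$ nonvanishing on the relevant locus), after blow-up and partition, make $N/D$ into an analytic function or its inverse with absolute value comparable to 1.

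Key tool: resolution of singularities. I should use the multi-blowup to put the ideal $(N, D)$ (or more relevantly, to monomialize things) into a nice form — specifically, to principalize the ideal $(N) + (D)$, or better, to make $N$ and $D$ into "monomials times units" relative to a simple normal crossing divisor. On a quasi-compact rigid analytic variety over a char-0 complete field, the Bierstone–Milman / Temkin desingularization applies (as stated in the excerpt). After such a multi-blowup $\sigma: \widetilde{X} \to X$, locally the ideal $(N \circ \sigma, D \circ \sigma)$ becomes principal and monomial: there's a simple normal crossing divisor such that $N \circ \sigma = u \cdot \mathfrak{m}$ and $D \circ \sigma = v \cdot \mathfrak{n}$ where $u, v$ are units and $\mathfrak{m}, \mathfrak{n}$ are monomials in the components of the SNC divisor, AND one of $\mathfrak{m} \mid \mathfrak{n}$ or $\mathfrak{n} \mid \mathfrak{m}$ holds (because the ideal is principalized, the monomials are totally ordered). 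Hence $(N/D) \circ \sigma = (u/v) \cdot (\text{monomial})^{\pm 1}$, and in the case $\mathfrak{m} \mid \mathfrak{n}$ this equals $(u/v) \cdot \mathfrak{m}/\mathfrak{n}$ with $\mathfrak{m}/\mathfrak{n}$ an analytic function (a monomial); in the other case its inverse is analytic.

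Now I still need two things. First, the divisibility of $N$ by $D$ (or vice versa) may only hold stalk-locally; to get it globally on pieces I invoke Proposition \ref{prop-part} (and Corollary \ref{NC-part}): partition $\widetilde{X}$ into quasi-rational subdomains on which $N \circ \sigma$ is divisible by $D \circ \sigma$ or vice versa. Second, I need $|\omega| \lhd 1$ for a fixed $\lhd \in \{<, >, =\}$. Here I use the closedness theorem again, together with the observation that on a quasi-compact rigid analytic variety an analytic function $\omega$ has $\{|\omega| \leq 1\}$ and $\{|\omega| \geq 1\}$ as R-subdomains, and these cover $X$; moreover the sets $\{|\omega| < 1\}$, $\{|\omega| = 1\}$, $\{|\omega| > 1\}$ partition things. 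Actually, I should be a bit careful: I want $\omega$ or $1/\omega$ analytic with absolute value $\lhd 1$. Given the quotient is $(u/v)\,\mathfrak{m}/\mathfrak{n}$, set $\omega$ to be this if $|(u/v)\mathfrak{m}/\mathfrak{n}| \leq 1$ on the piece, else take $1/\omega$; then further subdivide each piece by whether the absolute value is $< 1$ or $= 1$ (the locus $\{|\omega| = 1\}$ being cut out by $|\omega| \leq 1$ and $|\omega| \geq 1$, both R-subdomains by Corollary \ref{NC-part} applied to $\omega$ and $1$). Each resulting piece is an intersection of R-subdomains, hence an R-subdomain; and since $\widetilde{X}$ is quasi-compact, finitely many suffice.

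The main obstacle, and where care is needed, is the interface between the scheme-theoretic / formal output of resolution of singularities and the rigid-analytic and $K$-rational-points setting: I must check that after the multi-blowup the functions $N \circ \sigma$, $D \circ \sigma$ really are, on suitable R-subdomains covering $\widetilde{X}$, of the form (unit)$\times$(SNC monomial) with the monomials comparable — i.e., that principalization of $(N,D)$ can be achieved by an \emph{admissible} multi-blowup in the sense allowed here, and that the resulting local normal-crossing structure is visible on $K$-rational points via the closedness theorem. This is precisely the point at which the excerpt's list of tools (elimination of valued field quantifiers, the analytic closedness theorem, embedded resolution by blowing up, and quasi-rational subdomains) all come together: resolution gives the monomialization, quasi-rational subdomains (Proposition \ref{prop-part}) upgrade stalk-wise divisibility to divisibility on pieces, and the closedness theorem guarantees that the relevant absolute-value inequalities define the expected R-subdomains on $X(K)$. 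The bookkeeping of which polynomial $P$ contributes which powers to $N$ and $D$ is routine and I would not belabor it; the real content is the reduction $P(f_i, 1/g_j) \mapsto N/D \mapsto (\text{unit}) \cdot (\text{monomial})^{\pm1}$ via resolution.
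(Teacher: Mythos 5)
Your proposal follows essentially the same route as the paper: clear denominators to write $P(f_i,1/g_j)$ as $Q(f_i,g_j)/(g_1\cdots g_q)^d$, apply an admissible multi-blowup making numerator and denominator simple normal crossing divisors locally linearly ordered by divisibility, and then invoke Proposition~\ref{prop-part} and Corollary~\ref{NC-part} to obtain the partition into $R$-subdomains and the trichotomy $|\omega|\lhd 1$. The extra detail you give on the final subdivision by absolute value and on the passage to $K$-rational points is consistent with (and slightly more explicit than) the paper's one-line appeal to Corollary~\ref{NC-part}.
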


\begin{proof}
Obviously, we have
\begin{equation}\label{eq1}
   P(f_{1},\ldots,f_{p},1/g_{1}, \ldots, 1/g_{q}) = \frac{Q(f_{1},\ldots,f_{p},g_{1},\ldots,
   g_{q})}{(g_{1} \cdot \ldots \cdot g_{q})^{d}}
\end{equation}
for some polynomial $Q$ and a positive integer $d$. Take an
admissible multi-blowup $\sigma: \widetilde{X} \to X$ such that
the pull-backs under $\sigma$ of the numerator and denominator of
the above fraction are simple normal crossing divisors (unless
$Q(f_{1},\ldots,f_{p},g_{1},\ldots, g_{q})$ vanishes, yet which is
a trivial case) being locally (in the Zariski topology) linearly
ordered with respect to divisibility relation. Then the conclusion
follows directly from Corollary~\ref{NC-part}.
\end{proof}

\begin{remark}
The multi-blowup $\sigma$ is an isomorphism of (non quasi-compact)
rigid analytic varieties over the complement of the zero locus of
the function
$$ g_{1} \cdot \ldots \cdot g_{q} \cdot
   Q(f_{1},\ldots,f_{p},g_{1},\ldots, g_{q}). $$
\end{remark}

It is not difficult to strengthen the above proposition as
follows.

\begin{corollary}\label{ind2}
The conclusion of Proposition~\ref{ind} holds in the case of
several polynomials $P$ and several tuples of analytic functions.
\hspace*{\fill} $\Box$
\end{corollary}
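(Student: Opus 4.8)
The plan is to deduce the several-polynomial case from a \emph{single} application of resolution of singularities, rather than iterating Proposition~\ref{ind}. Suppose we are given polynomials $P_1,\ldots,P_k$ and, for each $j$, a tuple $f^{(j)}_1,\ldots,f^{(j)}_{p_j},\,g^{(j)}_1 \neq 0,\ldots,g^{(j)}_{q_j} \neq 0$ of analytic functions on $X$. First I would rewrite, exactly as in~\eqref{eq1},
$$ P_j\left(f^{(j)}_1,\ldots,f^{(j)}_{p_j},1/g^{(j)}_1,\ldots,1/g^{(j)}_{q_j}\right) = \frac{Q_j\left(f^{(j)}_1,\ldots,f^{(j)}_{p_j},g^{(j)}_1,\ldots,g^{(j)}_{q_j}\right)}{\left(g^{(j)}_1 \cdots g^{(j)}_{q_j}\right)^{d_j}} $$
for suitable polynomials $Q_j$ and positive integers $d_j$, discarding those indices $j$ for which the numerator vanishes identically (a trivial case).

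Next I would choose a single admissible multi-blowup $\sigma: \widetilde{X} \to X$ simultaneously monomializing all of the finitely many functions obtained above, i.e.\ such that the pull-backs under $\sigma$ of every numerator $Q_j(\ldots)$ and of every factor $g^{(j)}_i$ are simple normal crossing divisors which are, locally in the Zariski topology, linearly ordered by the divisibility relation. I expect this to be the only real point: one cannot simply apply Proposition~\ref{ind} $k$ times and hope the partitions produced remain compatible after the later blowups, so instead I would invoke the canonical, functorial embedded resolution of Bierstone--Milman~\cite{BM} or Temkin~\cite{Tem-1,Tem-2} applied to the divisor cut out by the product of all these functions at once --- equivalently, compose the successive multi-blowups, using that a composite of multi-blowups is again a multi-blowup and that, $\sigma$ being a morphism of rigid analytic varieties, pull-backs of analytic functions stay analytic. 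The remaining work here is the bookkeeping that ``monomial and Zariski-locally linearly ordered'' survives resolving everything together.

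Finally I would apply the strengthened form of Proposition~\ref{prop-part} to obtain a finite partition of $\widetilde{X}$ into quasi-rational subdomains on each of which the pulled-back numerators and denominators are linearly ordered by divisibility; on each such piece, Corollary~\ref{NC-part} together with Remark~\ref{rem-NC-part} --- applied to the several pairs consisting of the numerator and denominator of each of the $k$ fractions --- refines it into finitely many $R$-subdomains on which, over $\{x:\,g^{(j)}_1(x)\cdots g^{(j)}_{q_j}(x) \neq 0\}$, each pull-back $P_j(\ldots)\circ\sigma$ is the restriction of an analytic function $\omega_j$ or its multiplicative inverse with $|\omega_j| \lhd 1$. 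Taking the common refinement of these finitely many partitions, and using that the intersection of two $R$-subdomains is an $R$-subdomain while the complement of one is a finite disjoint union of $R$-subdomains, yields a single finite partition into $R$-subdomains that works simultaneously for all $P_j$, which is the assertion.
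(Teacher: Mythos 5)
Your argument is correct and is essentially the intended one: the paper leaves this corollary as an exercise, and the natural proof is exactly a single admissible multi-blowup resolving all numerators $Q_j$ and factors $g^{(j)}_i$ simultaneously, followed by Corollary~\ref{NC-part} in its several-pairs form (Remark~\ref{rem-NC-part}) and a common refinement of the resulting partitions into $R$-subdomains. Your observation that one should resolve everything at once rather than iterate Proposition~\ref{ind} is the right (and only) point of substance here.
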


\begin{remark}\label{composite}
Consider a quasi-affinoid algebra $A$ and elements
$$ f_{1},\ldots,f_{m},g_{1},\ldots,g_{n} \in A. $$
Then one has a unique homomorphism
$$ S_{m,n} \ni \phi \longrightarrow \phi
   (f_{1},\ldots,f_{m},g_{1},\ldots,g_{n}) \in A\langle f \rangle
   [[g]] $$
that sends $\xi_{i} \mapsto f_{i}$ and $\rho_{j} \mapsto g_{j}$
for $i=1,\ldots,m$, $j=1,\ldots,n$ (cf.~\cite[Corollary~5.1.8 and
Lemma~5.2.2]{L-R-0}). Similarly, given a quasi-compact rigid
analytic variety $X$, elements
$$ f_{1},\ldots,f_{m},g_{1},\ldots,g_{n} \in \mathcal{O}_{X}(X) $$
determine a unique homomorphism
$$ S_{m,n} \ni \phi \longrightarrow \phi
   (f_{1},\ldots,f_{m},g_{1},\ldots,g_{n}) \in \mathcal{O}_{X}(U), $$
where $U := \{ x \in X: |f(x)| \leq 1, \ |g(x)| < 1 \}$.
\end{remark}

\vspace{1ex}

Repeated application of Corollary~\ref{ind2} enables a
desingulariztion for $\mathcal{L}$-terms restricted to the closed
unit balls $\mathbb{B}_{N}$ via an inductive process described as
follows. We shall proceed with induction on the degree $\deg t$ of
$\mathcal{L}$-terms $t$, i.e.\ the maximum number of nested
superpositions that occur in $t$. Any $\mathcal{L}$-term $t$ of
degree $0$ in $N$ variables is on the unit ball $\mathbb{B}_{N}
\subset K^{N}$ of the form $P(f_{1},\ldots,f_{p},1/g_{1}, \ldots,
1/g_{q})$ from Corollary~\ref{ind2} with $X=\mathbb{B}_{N}$.


$\mathcal{L}$-terms $t$ of the form $h(t_{1},\ldots,t_{N})$, for a
function $h \in S_{m,n}$ with $m+n=N$ and some $\mathcal{L}$-terms
$t_{1},\ldots,t_{N}$ of degree $0$, are of degree $1$. Application
of Corollary~\ref{ind2} to the terms $t_{1},\ldots,t_{N}$ gives
rise to a stratification of $X_{0} := \mathbb{B}_{N}$ into Zariski
locally closed subsets $X_{0,i}$ resulting from
equations~\ref{eq1} which occur in given terms; namely one should
consider all equalities or inequalities of the form:
\begin{equation}\label{eq2}
   g_{1} = \ldots g_{k}=0, \ \ g_{k+1} \cdot \ldots \cdot g_{n} \neq 0,
\end{equation}
and
$$ Q(f_{1},\ldots,f_{p},g_{k+1},\ldots, g_{q}) = 0 \ \ \text{or} \ \
   Q(f_{1},\ldots,f_{p},g_{k+1},\ldots, g_{q}) \neq 0, $$
where $Q$ is a polynomial and $d$ a positive integer such that
$$
   P(f_{1},\ldots,f_{p},0, \ldots,0, 1/g_{k+1},\ldots 1/g_{q}) =
   \frac{Q(f_{1},\ldots,f_{p},g_{k+1},\ldots,
   g_{q})}{(g_{k+1} \cdot \ldots \cdot g_{q})^{d}}.
$$
The further data of the process on this first stage are the
restrictions $\sigma_{1i}$ to $\widetilde{X_{0i}} =
\sigma^{-1}(X_{0i})$ of the admissible multi-blowup $\sigma$ of
$X_{0}$ from Proposition~\ref{ind} if the inequality
$$ Q(f_{1},\ldots,f_{p},g_{k+1},\ldots, g_{q}) \neq 0 $$
holds on $X_{0,i}$; otherwise let $\sigma_{1i}$ be the identity on
$\widetilde{X_{0,i}} = X_{0,i}$. Note that every stratum
$\widetilde{X_{0,i}}$ is disjoint with the exceptional divisor of
the multi-blowup $\sigma$. Finally, partition each
$\widetilde{X_{0,i}}$ by means of $R$-subdomains from the
conclusion of Proposition~\ref{ind}. Let $X_{1} := \coprod
X_{1,i_{1}}$ be the disjoint union of all subsets $X_{1,i_{1}}$ of
those partitions (all $\widetilde{X_{0,i}}$ are taken into
account), being (non quasi-compact) rigid analytic varieties, and
$\sigma_{1}: X_{1} \to X_{0}$ be the induced map.

Then, on each $X_{1,i_{1}}$, every pull-back $t_{i} \circ
\sigma_{1}$ is an analytic function or its multiplicative inverse.
In view of Remark~\ref{composite}, the remaining inequalities from
Proposition~\ref{ind} ensure that the pull-back
$h(t_{1},\ldots,t_{N}) \circ \sigma_{1}$ is an analytic function
on each $X_{1,i_{1}}$.

Any $\mathcal{L}$-term $t$ of degree $1$ can be expressed
polynomially by terms considered above or their multiplicative
inverses. As before, application of Corollary~\ref{ind2} yields
the second stage of the process with the data: $\sigma_{2}: X_{2}
\to X_{1}$ and $X_{2} := \coprod X_{2,i_{1},i_{2}}$ the disjoint
union of (non quasi-compact) rigid analytic varieties
$X_{1,i_{1},i_{2}}$ constructed as at the first stage. Then the
pull-back $t \circ \sigma_{1} \circ \sigma_{2}$ is an analytic
function on $X_{2}$.

Given an $\mathcal{L}$-term $t$ of degree $k$, the process
described above consists of $(k+1)$ stages which yield $(k+1)$
maps $\sigma_{j}: X_{j} \to X_{j-1}$, $j=1,\ldots,k+1$,  induced
by admissible multi-blowups, such that the pull-back
$$ t \circ \sigma_{1} \circ \sigma_{2} \circ \ldots \circ \sigma_{k+1} $$
of the term $t$ ia an analytic function on $X_{k}$.

\vspace{1ex}

Note that the maps $\sigma_{j}$ are not definably closed because
stratifications are involved. The restrictions of multi-blowups,
of which the maps $\sigma_{j}$ are built, are in a sense not
linked together. Therefore we need another variant which, however,
does not cover all points of $X_{0} =\mathbb{B}_{N}$.
Nevertheless, it enables induction with respect to the dimension
$N$ of the ambient space. Namely we shall not stratify the space
with respect to equalities and inequalities \ref{eq2}, but first
consider a global multi-blowup $\sigma_{1}: \widetilde{X_{0}} \to
X_{0}$ and next partitioning from Proposition~\ref{ind} and
Corollary~\ref{ind2}, applied to the polynomials
$P(f_{1},\ldots,f_{p},1/g_{1}, \ldots, 1/g_{q})$. This allows us
to control the term on the complement of the zero locus of the
functions
\begin{equation}\label{eq-fun}
g_{1} \cdot \ldots \cdot g_{q} \cdot
Q(f_{1},\ldots,f_{p},g_{1},\ldots,
   g_{q}),
\end{equation}
over which the multi-blowup is an isomorphism of (non
quasi-compact) rigid analytic varieties. Therefore repeated
application of Corollary~\ref{ind2} enables a desingulariztion for
$\mathcal{L}$-terms restricted to the closed unit balls
$\mathbb{B}_{N}$ via the following inductive process of alternate
admissible multi-blowups and partitions into R-subdomains.

\begin{proposition}\label{gen}
Let $t$ be an $\mathcal{L}$-term of degree $k$. Then there exists
a desingularization process for the term $t$ restricted to the
unit ball $\mathbb{B}_{N}$ which consists of the following data:

\vspace{1ex}

$\bullet$ $X_{0} := \mathbb{B}_{N}$, $\sigma_{1}:
\widetilde{X_{0}} \to X_{0}$ an admissible multi-blowup, $\{
X_{1,i_{1}} \}_{i_{1}}$ a finite partition of $\widetilde{X_{0}}$
into $R$-subdomains;

$\bullet$ $X_{1} := \coprod X_{1,i_{1}}$ the disjoint union,
$\sigma_{2}: \widetilde{X_{1}} \to X_{1}$ an admissible
multi-blowup, $\{ X_{2,i_{1},i_{2}} \}_{i_{1},i_{2}}$ a finite
partition of $\widetilde{X_{1}}$ into $R$-subdomains compatible
with each $\sigma_{2}^{-1}(X_{1,i_{1}})$;

$$ \ldots\ldots\ldots $$

$\bullet$ $X_{k} := \coprod X_{k,i_{1}},\ldots,i_{k}$ the disjoint
union, $\sigma_{k+1}: \widetilde{X_{k}} \to X_{k}$ an admissible
multi-blowup, $\{ X_{k+1,i_{1},\ldots,i_{k+1}}
\}_{i_{1},\ldots,i_{k+1}}$ a finite partition of
$\widetilde{X_{k}}$ into $R$-subdomains compatible with each
$\sigma_{k+1}^{-1}(X_{k,i_{1},\ldots,i_{k}})$;

$\bullet$ $X_{k+1} := \coprod X_{k+1,i_{1},\ldots,i_{k+1}}$ the disjoint union;

$\bullet$ analytic functions $\chi_{j}$, $\psi_{j}$ on $X_{j}$ and
$\phi_{j}$ on $\widetilde{X_{j}}$, $\phi_{j}$ being simple normal
crossing divisors (unless non-vanishing), $j=0,\ldots,k$, defined
recursively as follows: the functions $\chi_{0} = \psi_{0},
\chi_{1},\ldots,\chi_{k}$ are provided successively (they are
products of factors of the form~\ref{eq-fun}), next
$$ \phi_{j} := \psi_{j} \circ \sigma_{j+1}, \ \ \psi_{j+1} :=
   \phi_{j} \cdot \chi_{j+1}, \ \ j=0,1,\ldots,k, $$
and eventually $\phi_{k} := \psi_{k} \circ \sigma_{k+1}$.

$\bullet$ an analytic function $\omega_{k}$ on the complement of
the zero locus $V(\phi_{k})$ of $\phi_{k}$ such that, on each
$R$-subdomain $X_{k+1,i_{1},\ldots,i_{k+1}}$, $\phi_{k}$ and
$\omega_{k}$ or $\phi_{k}$ and $1/\omega_{k}$ are simultaneous
simple normal crossing divisors, unless $\omega_{k}$ vanishes.

\vspace{1ex}

The spaces $X_{j}$ and $\widetilde{X_{j-1}}$, $j=1,\ldots,k$, are
homeomorphic in the canonical topology. The multi-blowups $\sigma_{j}$ transform $\psi_{j-1}$ to normal crossing divisors and are homeomorphisms
over the complement of $V(\psi_{j-1})$ for $j=1,\ldots,k+1$.  By abuse of notation, we
shall regard $\sigma_{j}$ also as a map from $X_{j}$ onto $X_{j-1}$.

\vspace{1ex}

The process results in that the pull-back
$$ t \circ \sigma_{1} \circ \sigma_{2} \circ \ldots \circ \sigma_{k+1} $$
is the restriction of $\omega_{k}$ away from the zero locus
$V(\phi_{k})$ of $\phi_{k}$.  \hspace*{\fill} $\Box$
\end{proposition}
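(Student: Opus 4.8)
The proposition is essentially a summary of the inductive construction carried out in the paragraphs preceding it, so the plan is to organize that construction into an induction on the degree $k=\deg t$ of the $\mathcal{L}$-term and to record that the recursively defined objects $\chi_j,\psi_j,\phi_j,\omega_k$ fit together. For $k=0$ I would use that, on $\mathbb{B}_N$, the term has the form $P(f_1,\ldots,f_p,1/g_1,\ldots,1/g_q)$ noted right after Corollary~\ref{ind2}; rewriting it via~\ref{eq1}, I set $\psi_0=\chi_0$ equal to the attached function of the form~\ref{eq-fun}, apply Proposition~\ref{ind} with $X=X_0:=\mathbb{B}_N$ to obtain the admissible multi-blowup $\sigma_1\colon\widetilde{X_0}\to X_0$ and the finite partition $\{X_{1,i_1}\}$ into $R$-subdomains, put $\phi_0:=\psi_0\circ\sigma_1$, and take $\omega_0$ from the conclusion of that proposition. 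The required homeomorphism and normal-crossing statements in this case are exactly the remark following Proposition~\ref{ind}.

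For the inductive step, fix $k\ge 1$ and assume the statement for all terms of smaller degree. Every $\mathcal{L}$-term of degree $k$ is a polynomial, with the inverse $x\mapsto 1/x$ allowed, in the coordinate functions, in finitely many $\mathcal{L}$-terms of degree $<k$, and in finitely many terms $h_\alpha(t^\alpha_1,\ldots,t^\alpha_{N_\alpha})$ with $h_\alpha\in S_{m_\alpha,n_\alpha}$ and all $t^\alpha_\beta$ of degree $<k$. I would run the first $k$ stages so as to desingularize all these subterms of degree $<k$ simultaneously: at each stage $j$ one applies Corollary~\ref{ind2} to \emph{all} the tuples of functions arising from the subterms not yet rendered analytic, obtaining a single admissible multi-blowup $\sigma_j$ together with a finite partition into $R$-subdomains refining the pull-back of the previous one; this is exactly what Corollary~\ref{ind2} permits (several polynomials, several tuples), the refinement is legitimate because an intersection of $R$-subdomains is again an $R$-subdomain, and a pull-back that has already become analytic stays analytic under further multi-blowups and restrictions. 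Setting $\psi_j:=\phi_{j-1}\cdot\chi_j$ and $\phi_j:=\psi_j\circ\sigma_{j+1}$ with $\chi_j$ the product of the relevant functions~\ref{eq-fun}, one arrives at $X_0,\ldots,X_k$ on which every pull-back $t^\alpha_\beta\circ\sigma_1\circ\cdots\circ\sigma_k$ is an analytic function or its multiplicative inverse; by Remark~\ref{composite} each pull-back of $h_\alpha(t^\alpha_1,\ldots,t^\alpha_{N_\alpha})$ is then analytic on the corresponding $R$-subdomain, so $t\circ\sigma_1\circ\cdots\circ\sigma_k$ is there of the polynomial-with-inverse shape handled by Corollary~\ref{ind2}.

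The $(k+1)$-st stage is one further application of Corollary~\ref{ind2} on $X_k$: it yields $\sigma_{k+1}\colon\widetilde{X_k}\to X_k$, the partition $\{X_{k+1,i_1,\ldots,i_{k+1}}\}$ into $R$-subdomains compatible with the $\sigma_{k+1}^{-1}(X_{k,i_1,\ldots,i_k})$, the function $\chi_k$ of the form~\ref{eq-fun} with $\psi_k:=\phi_{k-1}\cdot\chi_k$ and $\phi_k:=\psi_k\circ\sigma_{k+1}$, and an analytic $\omega_k$ on $\widetilde{X_k}\setminus V(\phi_k)$ with $\phi_k,\omega_k$ (or $\phi_k,1/\omega_k$) simultaneous normal crossing divisors on each $X_{k+1,i_1,\ldots,i_{k+1}}$ and $t\circ\sigma_1\circ\cdots\circ\sigma_{k+1}=\omega_k$ off $V(\phi_k)$. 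The topological assertions follow uniformly: each $X_j$ is a finite disjoint union of $R$-subdomains of $\widetilde{X_{j-1}}$, hence homeomorphic to it in the canonical topology, and by the remark after Proposition~\ref{ind} each $\sigma_j$ sends $\psi_{j-1}$ to a normal crossing divisor and is an isomorphism of (non quasi-compact) rigid analytic varieties, so a homeomorphism on $K$-rational points, over the complement of $V(\psi_{j-1})$; that these rigid-analytic facts transfer to the canonical topology is what the closedness theorem~\cite{Now-3} provides.

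The main obstacle is bookkeeping rather than a new idea: one must keep every space produced inside the class of (non quasi-compact) rigid analytic varieties for which Proposition~\ref{ind} and Remark~\ref{composite} are available, ensure that the partitions at successive stages can indeed be taken compatible after the multi-blowups are pulled back, and---most delicately---verify that the recursion $\psi_{j+1}=(\psi_j\circ\sigma_{j+1})\cdot\chi_{j+1}$ propagates the ``simultaneous simple normal crossings'' property through all $k+1$ stages, so that in the end the single function $\omega_k$ genuinely captures the total pull-back of $t$ away from $V(\phi_k)$.
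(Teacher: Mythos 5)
Your proposal is correct and takes essentially the same route as the paper: Proposition~\ref{gen} is established there precisely by the inductive process of alternating admissible multi-blowups and partitions into $R$-subdomains that you describe, with Proposition~\ref{ind}, Corollary~\ref{ind2} and Remark~\ref{composite} supplying the successive stages and the functions $\chi_{j}$, $\psi_{j}$, $\phi_{j}$ accumulating the factors of the form~\ref{eq-fun} so that the term is controlled only away from $V(\phi_{k})$. The sole presentational difference is that the paper first sketches a stratified variant and then discards it in favour of the global-multi-blowup variant you use, for exactly the reason you note at the end.
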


\vspace{1ex}

It is not difficult to obtain the following generalization.

\begin{corollary}\label{gen2}
The conclusion of Proposition~\ref{gen} holds in the case of
several $\mathcal{L}$-term restricted to the unit ball
$\mathbb{B}_{N}$.  \hspace*{\fill} $\Box$
\end{corollary}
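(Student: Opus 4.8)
The plan is to run the inductive desingularization process of Proposition~\ref{gen} simultaneously for the finite family of terms $t^{(1)},\ldots,t^{(l)}$, taking $k := \max_{1\le j\le l}\deg t^{(j)}$ in the role of the degree. The point is that every step of that process which appealed to Proposition~\ref{ind} can instead be fed through its many-term, many-tuple version Corollary~\ref{ind2}: at each of the $(k+1)$ stages one collects all the polynomials $P$ and all the tuples $(f_{1},\ldots,f_{p},g_{1},\ldots,g_{q})$ of analytic functions arising from all the terms currently under consideration, and Corollary~\ref{ind2} furnishes a single admissible multi-blowup $\sigma$ together with a single finite partition into $R$-subdomains that works for all of them at once.

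Concretely, I would induct on $k$. Expanding all terms of degree $\le k$ polynomially in terms of degree $\le k-1$ and their multiplicative inverses, as in the body of Section~2, one obtains exactly the data to which Corollary~\ref{ind2} applies. It yields a common $\sigma_{1}\colon\widetilde{X_{0}}\to X_{0}=\mathbb{B}_{N}$ and a common finite $R$-subdomain partition $\{X_{1,i_{1}}\}$ of $\widetilde{X_{0}}$ such that, on each piece, the pull-backs of the relevant numerators and denominators are normal crossing divisors locally linearly ordered by divisibility (unless vanishing), and the pull-back of each degree-$0$ constituent is an analytic function or its multiplicative inverse. One then defines $\chi_{1}$, hence $\psi_{1}=\phi_{0}\cdot\chi_{1}$, as a product of the factors of the form~\ref{eq-fun} contributed by all the terms, and likewise at later stages. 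After $(k+1)$ stages each pull-back $t^{(j)}\circ\sigma_{1}\circ\cdots\circ\sigma_{k+1}$ is the restriction of an analytic function $\omega_{k}^{(j)}$ away from $V(\phi_{k})$; passing to a further common $R$-subdomain refinement and invoking Corollary~\ref{NC-part} in the several-pairs form of Remark~\ref{rem-NC-part}, one arranges in addition that $\phi_{k}$ together with each $\omega_{k}^{(j)}$, or with its inverse, forms a simultaneous normal crossing divisor on every final piece. A term of degree strictly below $k$ is simply carried trivially through the remaining stages once its pull-back has become analytic, the later multi-blowups being homeomorphisms over, and the later partitions compatible with, the strata in question.

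The only genuine issue, and the step I expect to need the most care, is keeping all the partitions finite and mutually compatible across the several terms and the several stages: each term demands a refinement of the $R$-subdomain partition at a given stage, and these refinements must be amalgamated into one. This is handled exactly as in the proof of Proposition~\ref{prop-part} — intersections of quasi-rational (more generally, $R$-) subdomains are again such, and complements are finite disjoint unions of them — so a common finite refinement always exists. Granting this, the argument for Proposition~\ref{gen} applies verbatim and yields the asserted conclusion for the whole family. \hspace*{\fill} $\Box$
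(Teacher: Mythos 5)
Your proposal is correct and follows exactly the route the paper intends (the paper omits the proof, remarking only that the generalization "is not difficult to obtain"): run the process of Proposition~\ref{gen} simultaneously for all terms via Corollary~\ref{ind2}, amalgamate the $R$-subdomain partitions by common refinement, and use the several-pairs form of Corollary~\ref{NC-part} at the final stage. Your attention to the compatibility and finiteness of the refined partitions is precisely the point that makes the omitted argument go through.
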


\begin{remark}\label{rem-rat}
Resolution of singularities including, in particular, the
technique of admissible blow-ups, applies to the $K$-rational
points $X(K)$ of rigid analytic varieties $X$. Therefore also
valid are the counterparts of the above two results for
$K$-rational points, whose formulations are straightforward. For
instance, the closed unit ball $\mathbb{B}_{N}$ must be replaced
with $\left( K^{\circ} \right)^{N}$.
\end{remark}


By the closedness theorem (cf.~\cite[Theorem~1.1]{Now-3}) and the
descent property below, the technique of blowing up can be applied
to constructions of definable retractions.

\begin{lemma}\label{descent}
Let $\sigma: Y \to X$ be a continuous $\mathcal{L}$-definable
surjective map which is $\mathcal{L}$-definably closed, and $A$ be
a closed $\mathcal{L}$-definable subset of $X$ such that $\sigma$
is bijective over $X \setminus A$. Then every
$\mathcal{L}$-definable retraction $\widetilde{r}: Y \to
\sigma^{-1}(A)$ descends uniquely to a unique
$\mathcal{L}$-definable retraction $r: X \to A)$.
\end{lemma}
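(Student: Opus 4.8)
The plan is to construct $r$ by exploiting the bijectivity of $\sigma$ over $X\setminus A$ and then use topological closedness to patch in the behaviour over $A$. Concretely, since $\sigma$ is bijective over $X\setminus A$, there is a well-defined section $s\colon X\setminus A\to Y$ of $\sigma$ over that open set; on $X\setminus A$ I would set $r:=\sigma\circ\widetilde{r}\circ s$, and on $A$ I would set $r:=\mathrm{id}_{A}$. The first thing to check is that these two partial definitions agree and together give a map $X\to A$: on $X\setminus A$ the value $\widetilde{r}(s(x))$ lies in $\sigma^{-1}(A)$, so $\sigma(\widetilde{r}(s(x)))\in A$, and for $x\in A$ the value is $x\in A$; hence the image lies in $A$ and $r$ restricts to the identity on $A$, so $r$ is a retraction as soon as it is continuous and definable.

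Definability is the routine part. The section $s$ is $\mathcal{L}$-definable because its graph is the intersection of the graph of $\sigma$ with $(X\setminus A)\times Y$, which is an $\mathcal{L}$-definable set, and over $X\setminus A$ this relation is single-valued by hypothesis; composing with the $\mathcal{L}$-definable maps $\widetilde{r}$ and $\sigma$ and glueing with the identity on the $\mathcal{L}$-definable set $A$ yields an $\mathcal{L}$-definable map $r$. Uniqueness is immediate: any definable retraction $r'\colon X\to A$ with $r'\circ\sigma=\sigma\circ\widetilde{r}$ (the natural compatibility one wants from a descent) must equal $r'=\mathrm{id}$ on $A$ and $r'=r'\circ\sigma\circ s=\sigma\circ\widetilde{r}\circ s=r$ on $X\setminus A$; and in any case two retractions onto $A$ that agree on the dense-in-the-relevant-sense complement and on $A$ coincide.

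The main obstacle is continuity of $r$ at points of $A$. On the open set $X\setminus A$ continuity is clear because there $r=\sigma\circ\widetilde{r}\circ s$ and $s$ is continuous (its graph is the continuous graph of $\sigma$ restricted, and $\sigma$ is a homeomorphism over $X\setminus A$ by the definable closedness plus bijectivity). For a point $a\in A$ and a net (or, using the $K$-topology and definability, a definable curve) $x_\nu\to a$ with $x_\nu\in X\setminus A$, I must show $r(x_\nu)\to a$. Here is where definable closedness of $\sigma$ enters decisively: the points $y_\nu:=s(x_\nu)\in Y$ have $\sigma(y_\nu)=x_\nu\to a$, and since $\sigma$ is $\mathcal{L}$-definably closed and surjective, after passing to a definable subnet the $y_\nu$ cluster at some $y\in Y$ with $\sigma(y)=a\in A$, so $y\in\sigma^{-1}(A)$; then $r(x_\nu)=\sigma(\widetilde{r}(y_\nu))\to\sigma(\widetilde{r}(y))=\sigma(y)=a$, using continuity of $\widetilde{r}$ and $\sigma$ together with $\widetilde{r}|_{\sigma^{-1}(A)}=\mathrm{id}$. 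The one technical point to be careful about is justifying the extraction of a convergent subnet inside the fibre structure of $\sigma$: this is exactly what the $\mathcal{L}$-definable closedness of $\sigma$ provides (in the form of the valuative/curve-selection criterion available in this language), and it is the property for which the closedness theorem of \cite{Now-3} was invoked in the paragraph preceding the lemma. Once this clustering step is in hand, continuity of $r$ on all of $X$ follows, completing the proof.
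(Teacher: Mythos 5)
Your proposal is correct and takes essentially the same route as the paper, whose proof simply exhibits the same gluing formula --- $r=\sigma\circ\widetilde{r}\circ\sigma^{-1}$ on $X\setminus A$ and the identity on $A$ --- and defers the verification to \cite[Lemma~3.1]{Now-4}. The one step worth tightening is your continuity check at points of $A$: definable closedness is a statement about images of closed $\mathcal{L}$-definable sets, not about nets or subnets (the value set of a net need not be definable), so instead of extracting cluster points it is cleaner to observe that $r\circ\sigma=\sigma\circ\widetilde{r}$ gives $\sigma^{-1}\left(r^{-1}(F)\right)=\widetilde{r}^{-1}\left(\sigma^{-1}(F)\right)$ for any closed definable $F\subset A$, whence $r^{-1}(F)=\sigma\left(\widetilde{r}^{-1}\left(\sigma^{-1}(F)\right)\right)$ is closed because $\sigma$ is a definably closed surjection.
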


\begin{proof}
The retraction $r$ is given by the formula
(cf.~\cite[Lemma~3.1]{Now-4}):
$$ r(x) = \left\{ \begin{array}{cl}
                        \sigma \left( \tilde{r} \left( \sigma^{-1}(x) \right) \right) & \mbox{ if } \ x \in X \setminus A, \\
                        x & \mbox{ if } \ x \in A.
                        \end{array}
               \right.
$$
\end{proof}

We immediately obtain the two corollaries.

\begin{corollary}\label{descent-2}
Let $\sigma: Y \to X$ be a continuous $\mathcal{L}$-definable
surjective map which is $\mathcal{L}$-definably closed, $A$ and
$S$ be closed $\mathcal{L}$-definable subsets of $X$ and $W:=
\sigma^{-1}(S)$. Suppose that the restriction of $\sigma$ to $Y
\setminus W$ is bijective. Then every $\mathcal{L}$-definable
retraction $\widetilde{r}: Y \to \sigma^{-1}(A) \cup W$ descends
uniquely to an $\mathcal{L}$-definable retraction $r: X \to A \cup
S$. \hspace*{\fill} $\Box$
\end{corollary}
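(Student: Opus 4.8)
The plan is to obtain the corollary as an immediate instance of Lemma~\ref{descent}, applied with the closed set $A$ there replaced by $B := A \cup S$. First I would record the elementary set-theoretic identities: since $A$ and $S$ are closed and $\mathcal{L}$-definable, so is $B = A \cup S$; and $\sigma^{-1}(B) = \sigma^{-1}(A) \cup \sigma^{-1}(S) = \sigma^{-1}(A) \cup W$, which is precisely the target of the given retraction $\widetilde{r}$. Thus $\widetilde{r}$ is an $\mathcal{L}$-definable retraction $Y \to \sigma^{-1}(B)$, exactly of the shape required by Lemma~\ref{descent}.

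Next I would check the remaining hypothesis of Lemma~\ref{descent}, namely that $\sigma$ is bijective over $X \setminus B$. The map $\sigma$ itself is untouched, hence still continuous, $\mathcal{L}$-definable, surjective and $\mathcal{L}$-definably closed. By surjectivity and the defining relation $W = \sigma^{-1}(S)$ we have $\sigma(Y \setminus W) = X \setminus S$, so the hypothesis that $\sigma$ restricted to $Y \setminus W$ be bijective says exactly that $\sigma \colon Y \setminus W \to X \setminus S$ is a bijection. Since $X \setminus B \subseteq X \setminus S$ and correspondingly $\sigma^{-1}(X \setminus B) \subseteq Y \setminus W$, restricting this bijection yields that $\sigma$ is bijective over $X \setminus B$, as needed.

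With all hypotheses of Lemma~\ref{descent} in place, that lemma produces a unique $\mathcal{L}$-definable retraction $r \colon X \to B = A \cup S$, given explicitly by
$$ r(x) = \left\{ \begin{array}{cl} \sigma\!\left( \widetilde{r}\!\left( \sigma^{-1}(x) \right) \right) & \text{ if } x \in X \setminus (A \cup S), \\ x & \text{ if } x \in A \cup S, \end{array} \right. $$
where $\sigma^{-1}(x)$ denotes the unique preimage when $x \notin A \cup S$. There is no genuine obstacle: the whole content is the repackaging $\sigma^{-1}(A \cup S) = \sigma^{-1}(A) \cup W$ together with the transfer of bijectivity via the inclusion $X \setminus (A \cup S) \subseteq X \setminus S$. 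Continuity of $r$ and its uniqueness are already part of the statement of Lemma~\ref{descent} (resting ultimately on the $\mathcal{L}$-definable closedness of $\sigma$ and the closedness theorem), so nothing beyond these verifications is required.
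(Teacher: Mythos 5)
Your proposal is correct and matches the paper's intent exactly: the paper states this corollary as an immediate consequence of Lemma~\ref{descent}, and your argument simply makes explicit the substitution $A \mapsto A\cup S$, the identity $\sigma^{-1}(A\cup S)=\sigma^{-1}(A)\cup W$, and the transfer of bijectivity from $Y\setminus W \to X\setminus S$ to the smaller set $X\setminus(A\cup S)$. Nothing further is needed.
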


\begin{corollary}\label{descent-3}
Under the above assumptions, suppose that there is an
$\mathcal{L}$-definable retraction $\theta: S \to A \cap S$. Then
there is an $\mathcal{L}$-definable retraction $\rho: X \to A$.
\end{corollary}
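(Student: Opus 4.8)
The plan is to reduce the problem to retracting onto the larger closed set $A\cup S$, and then to retract $A\cup S$ onto $A$ using $\theta$. For the first reduction I invoke Corollary~\ref{descent-2}: a definable retraction $\widetilde{r}:Y\to\sigma^{-1}(A)\cup W$ descends to a definable retraction $r:X\to A\cup S$, and this $r$ is the starting point of the argument (such an $r$ is at our disposal in the situation where the corollary is meant to be applied). So it remains to promote $r$, which retracts onto $A\cup S$, to a retraction onto $A$.

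Next I would build a definable retraction $\pi:A\cup S\to A$ by gluing the identity on $A$ with the given map $\theta$ on $S$, that is, $\pi|_{A}:=\mathrm{id}_{A}$ and $\pi|_{S}:=\theta$. The two prescriptions are compatible on the overlap $A\cap S$, since $\theta$, being a retraction onto $A\cap S$, restricts to the identity there; hence $\pi$ is a well-defined $\mathcal{L}$-definable map, with image $\pi(A)\cup\pi(S)=A\cup\theta(S)=A\cup(A\cap S)=A$. As $A$ and $S$ are both closed in $A\cup S$ and $\pi$ is continuous on each of them, the pasting lemma for closed sets shows $\pi$ is continuous; and $\pi|_{A}=\mathrm{id}_{A}$ shows it is a retraction.

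Finally I set $\rho:=\pi\circ r:X\to A$. It is continuous and $\mathcal{L}$-definable as a composition of continuous definable maps, its image lies in $\pi(A\cup S)=A$, and for $x\in A$ one has $r(x)=x$ because $A\subseteq A\cup S$ and $r$ retracts onto $A\cup S$, whence $\rho(x)=\pi(x)=x$; thus $\rho$ is the desired $\mathcal{L}$-definable retraction of $X$ onto $A$. I do not expect a genuine obstacle here: the whole argument is a gluing, and the only points needing (routine) attention are the compatibility of the two branches of $\pi$ along $A\cap S$, which is exactly the defining property of $\theta$, the continuity of a map glued along two closed pieces, and the bookkeeping that ensures $r$ is produced as a retraction onto $A\cup S$ rather than onto $A$ or onto $S$ alone.
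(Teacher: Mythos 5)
Your proof is correct and follows essentially the same route as the paper: obtain $r:X\to A\cup S$ from Corollary~\ref{descent-2}, glue the identity on $A$ with $\theta$ on $S$ to get a definable retraction $A\cup S\to A$, and compose. The extra checks you supply (compatibility on $A\cap S$ and continuity via pasting along closed sets) are exactly the routine details the paper leaves implicit.
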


\begin{proof}
Define the map $\eta: A \cup S \to A$ by putting $\eta (x) =
\theta(x)$ if $x \in S$ and $\eta (x) = x$ if $x \in A$.
Obviously, $\eta$ is an $\mathcal{L}$-definable retraction. Next
set $\rho := \eta \circ r$.
\end{proof}

\section{Proof of the main theorems on definable retractions}

In this section, we shall deal with the rational points $X(K)$ of
rigid analytic varieties $X$. For simplicity of notation, from now
on we shall write it simply $X$ instead of $X(K)$.

\vspace{1ex}

We begin by considering definable retractions of closed definable
subsets contained in the closed unit ball $X_{0} =
(K^{\circ})^{N}$. However all the arguments carry over, mutatis
mutandi, to subsets in the open unit ball $(K^{\circ \circ})^{N}$,
in the projective space $\mathbb{P}^{n}(K)$ or in the products
$(K^{\circ})^{N} \times \mathbb{P}^{n}(K)$ or $(K^{\circ
\circ})^{N} \times \mathbb{P}^{n}(K)$. We shall proceed with
induction on the dimension $N$ of the ambient space $X_{0}$. To
this end, we need the following

\begin{lemma}\label{ind-var}
Let $Z \varsubsetneq X$ be two closed subvarieties of the product
$(K^{\circ})^{N} \times \mathbb{P}^{n}(K)$ and $A$ a closed
$\mathcal{L}$-definable subset of $Z$. Suppose that $X$ is
non-singular of dimension $N$ and Theorem~\ref{main-2} holds for
closed $\mathcal{L}$-definable subsets of every non-singular
variety of this kind of dimension $< N$. Then there exists an
$\mathcal{L}$-definable retraction $r:Z \to A$.
\end{lemma}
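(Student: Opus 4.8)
The plan is to proceed by induction on $d := \dim Z$, keeping the standing hypothesis that Theorem~\ref{main-2} holds in dimension $<N$ in force throughout. The cases $Z = \emptyset$ and $d = 0$ are disposed of at once: for $d=0$, assuming (as we may) that $A\neq\emptyset$, one simply sends the finite set $Z$ to a single point of $A$, which is an $\mathcal{L}$-definable locally constant, hence continuous, map. So assume $d \geq 1$. First I would pass to the connected components of $X$, which are irreducible since $X$ is non-singular, in order to assume $X$ irreducible; then $\dim Z < N$, indeed $Z$ is nowhere dense in $X$. If $Z$ happens to be non-singular, then $Z$ is a non-singular closed subvariety of $(K^{\circ})^{N} \times \mathbb{P}^{n}(K)$ of dimension $<N$, and the retraction $r \colon Z \to A$ is handed to us directly by the standing hypothesis on Theorem~\ref{main-2}.

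The substantial case is when $Z$ is singular. I would set $S := \mathrm{Sing}\,(Z)$, a Zariski closed (hence closed $\mathcal{L}$-definable) subvariety of $Z$ with $\dim S < d$, and invoke resolution of singularities (Bierstone--Milman~\cite{BM} or Temkin~\cite{Tem-1,Tem-2}, valid on $K$-rational points of rigid analytic varieties by Remark~\ref{rem-rat}) to obtain a composite $\sigma \colon \widetilde{X} \to X$ of admissible blow-ups with non-singular centres lying over $S$, chosen so that the strict transform $\widehat{Z} \subset \widetilde{X}$ of $Z$ is non-singular. Since $\widetilde{X}$ is non-singular of dimension $N$ and sits, via the Segre embedding of $\mathbb{P}^{n}(K)$ together with the projective spaces introduced by the successive blow-ups, inside some $(K^{\circ})^{N} \times \mathbb{P}^{M}(K)$, the variety $\widehat{Z}$ is a non-singular closed subvariety of $(K^{\circ})^{N} \times \mathbb{P}^{M}(K)$ of dimension $d < N$. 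The induced map $\pi := \sigma|_{\widehat{Z}} \colon \widehat{Z} \to Z$ is proper and surjective, hence continuous and $\mathcal{L}$-definably closed by the closedness theorem~\cite[Theorem~1.1]{Now-3}, and it restricts to an isomorphism over $Z \setminus S$.

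I would then descend a retraction through $\pi$ by means of Corollary~\ref{descent-3}, taking the role of $X$ there to be played by $Z$, with $\sigma := \pi$ and $W := \pi^{-1}(S)$; bijectivity of $\pi$ over $Z \setminus S$ puts the hypotheses of Corollaries~\ref{descent-2} and~\ref{descent-3} in place, and it remains only to supply two retractions. The first, $\theta \colon S \to A \cap S$, comes from the induction hypothesis on $d$ applied to $S$ in place of $Z$ — legitimate because $S \varsubsetneq X$ is a closed subvariety with $\dim S < d$, $X$ is non-singular of dimension $N$, and the hypothesis on Theorem~\ref{main-2} is unchanged. The second, $\widetilde{r} \colon \widehat{Z} \to \pi^{-1}(A) \cup W = \pi^{-1}(A \cup S)$, exists because $\pi^{-1}(A \cup S)$ is a closed $\mathcal{L}$-definable subset of the non-singular variety $\widehat{Z} \subset (K^{\circ})^{N} \times \mathbb{P}^{M}(K)$ of dimension $<N$, to which the standing hypothesis applies. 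Corollary~\ref{descent-3} then produces the desired $\mathcal{L}$-definable retraction $\rho \colon Z \to A$, closing the induction.

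The main obstacle, and the reason the induction on $\dim Z$ is needed at all, is the non-injectivity of $\pi$ over $S = \mathrm{Sing}\,(Z)$: one cannot simply retract the resolution $\widehat{Z}$ and push the result forward, but must simultaneously retract the strictly lower-dimensional $S$ onto $A \cap S$ and glue the two pieces via Corollary~\ref{descent-3}. The remaining technical points — keeping $\widehat{Z}$ within the admissible class of ambient spaces (handled by the Segre embedding) and the $\mathcal{L}$-definable closedness of $\pi$ (handled by the closedness theorem) — are routine.
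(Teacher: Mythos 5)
Your proof follows essentially the same route as the paper's: induction on $\dim Z$, embedded resolution of the singularities of $Z$ inside $X$, descent of a retraction through the resolution via Corollaries~\ref{descent-2} and~\ref{descent-3}, and the induction hypothesis applied to $S=\mathrm{Sing}\,Z$ to supply the retraction $S\to A\cap S$. The one place you deviate is that you descend from the strict transform $\widehat{Z}$ alone, whereas the paper descends from the disjoint union of the strict transform and the components of the exceptional divisor, i.e.\ from the whole simple normal crossing preimage $\tau^{-1}(Z)$; the point of including the exceptional components is that the resulting map onto $Z$ is then genuinely surjective, as Corollary~\ref{descent-2} requires, while the strict transform need not surject onto $Z(K)$ at the level of $K$-rational points (smooth $K$-points of $Z$ need not be dense in $Z(K)$ in the valuation topology, e.g.\ when all $K$-points of $Z$ are singular). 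This is harmless in your argument, since every point of $Z$ missed by $\pi$ lies in $S$ and is fixed by the descent formula, but strictly speaking you are invoking Corollary~\ref{descent-2} outside its stated hypotheses, so either add the exceptional components to your $Y$ as the paper does or note that surjectivity over $A\cup S$ is not actually used.
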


\begin{proof}
We shall proceed with induction on the dimension of $Z$. First
apply embedded resolution of singularities and take a multi-blowup
$\tau: V \to X$ which is an isomorphism over the complement of the
singular locus $S$ of $Z$ and such that the pre-image
$\tau^{-1}(Z)$ is a simple normal crossing subvariety of $V$,
which is the union of the transform $\widetilde{Z}$ of $Z$ and the
exceptional divisor $E$.

Taking the disjoint union $Y$ of $\widetilde{Z}$ and the
components of $E$, we get a map $\sigma: Y \to Z$ which satisfies
the assumptions from Corollary~\ref{descent-2} with $W :=
\sigma^{-1}(S)$. Of course, $Y$ is non-singular of dimension $<N$.
By the assumption, there is an $\mathcal{L}$-definable retraction
$\tilde{\rho}: Y \to \sigma^{-1}(A) \cup W$. Hence and by
Corollary~\ref{descent-2}, there is an $\mathcal{L}$-definable
retraction
$$ \rho: Z \to A \cup S. $$

But, by the induction hypothesis, there is an
$\mathcal{L}$-definable retraction
$$ r_{1}: S \to A \cap S $$
which, similarly as it was in the proof of Lemma~\ref{descent-3},
gives rise to an $\mathcal{L}$-definable retraction
$$ r_{2}: A \cup S \to A. $$
Then the map $r := r_{2} \circ \rho$ is the retraction we are
looking for.
\end{proof}

\vspace{1ex}

By elimination of valued field quantifiers, every
$\mathcal{L}$-definable subset of $(K^{\circ})^{N}$ is a finite
union of sets of the form
$$ A = \left\{ x \in (K^{\circ})^{N}: (\mathrm{rv}\, t_{1}(x), \ldots, \mathrm{rv}\, t_{s}(x))
   \in B \right\}, $$
where $B$ is an $\mathcal{L}$-definable subset of $RV(K)^{s}$. We
are going to combine this description with the desingularization
process from Proposition~\ref{gen} and Corollary~\ref{gen2}
applied to the sets of $K$-rational points, as indicated in
Remark~\ref{rem-rat}. Putting
$$ A^{\sigma} := \sigma^{-1}(A), \ \ t^{\sigma} := t \circ \sigma \ \ \text{and} \ \ \tau_{j} := \sigma_{1} \circ \ldots \circ \sigma_{j}, $$
we get
$$ A^{\tau_{k+1}} \cap X_{k+1,i_{1},\ldots,i_{k+1}} = $$
$$ \left\{ x \in X_{k+1,i_{1},\ldots,i_{k+1}}:
   (\mathrm{rv}\, t_{1}^{\tau_{k+1}}(x), \ldots, \mathrm{rv}\, t_{s}^{\tau_{k+1}}(x)) \in B \right\} $$
and
\begin{equation}\label{set1}
 (A^{\tau_{k+1}} \cap X_{k+1,i_{1},\ldots,i_{k+1}}) \setminus
   V(\phi_{k}) =
\end{equation}
$$ \left\{ x \in X_{k+1,i_{1},\ldots,i_{k+1}} \setminus V(\phi_{k}):
   (\mathrm{rv}\, \omega_{1}(x), \ldots, \mathrm{rv}\, \omega_{s}(x)) \in B \right\}. $$
Clearly, modifying the set $B$, we can assume that the functions
$\phi_{k}$ and $\omega_{1},\ldots,\omega_{s}$ are simultaneous
normal crossing divisors on $X_{k+1,i_{1},\ldots,i_{k+1}}$. Then
the set~\ref{set1} is a finite union of the subsets
$$ (X_{k+1,i_{1},\ldots,i_{k+1}} \setminus V(\phi_{k})) \cap V_{j} \cap G_{j}, $$
where
$$ V_{j} := \{ x \in X_{k+1,i_{1},\ldots,i_{k+1}}: \omega_{i}(x)
   =0 \ \ \text{for} \ \ i \in I \} $$
and
$$ G_{j} := \left\{ x \in X_{k+1,i_{1},\ldots,i_{k+1}} \setminus V(\phi_{k}) :
   (\mathrm{rv}\, \omega_{1}(x), \ldots, \mathrm{rv}\, \omega_{s}(x)) \in B, \right. $$
$$ \left. \omega_{q}(x) \neq 0 \ \ \text{for} \ \ q \in \{ 1,\ldots,s \} \setminus I \right\} $$
with all subsets $I \subset \{1,\ldots,s \}$. It is easy to check
that $G_{j}$ are clopen subsets of $X_{k+1,i_{1},\ldots,i_{k+1}}
\setminus V(\phi_{k})$.

\vspace{1ex}

Hence the set
$$ (A^{\tau_{k+1}} \cup V(\phi_{k})) \cap X_{k+1,i_{1},\ldots,i_{k+1}} $$
falls under the description from Proposition~3.3 of our
paper~\cite{Now-4}, which treated the algebraic case of the
problems under study. The proof of Theorem~3.1 (op.cit.) on the
existence of definable retractions onto closed definable subsets
uses Proposition~3.2 (op.cit.) on the existence of definable
retractions onto Zariski closed subsets and Proposition~3.3
(op.cit.). Note that the proofs of those Proposition~3.2 and
Theorem~3.1 modulo the description from Proposition~3.3 carry over
to the affinoid case treated here.

\vspace{1ex}

Under the circumstances, almost the same proof as in the algebraic
case provides an $\mathcal{L}$-definable retraction
$$ r_{k+1,i_{1},\ldots,i_{k+1}}: X_{k+1,i_{1},\ldots,i_{k+1}} \to
   (A^{\tau_{k+1}} \cup V(\phi_{k})) \cap X_{k+1,i_{1},\ldots,i_{k+1}}. $$
Since the subsets $X_{k+1,i_{1},\ldots,i_{k+1}}$ indexed by
$i_{k+1}$ are a clopen covering of
$\sigma_{k+1}^{-1}(X_{k,i_{1},\ldots,i_{k}})$, we get by gluing
also a retraction
$$ \tilde{r}_{k,i_{1},\ldots,i_{k}}:
   \sigma_{k+1}^{-1}(X_{k,i_{1},\ldots,i_{k}}) \to (A^{\tau_{k+1}} \cup V(\phi_{k})) \cap
   \sigma_{k+1}^{-1}(X_{k,i_{1},\ldots,i_{k}}). $$
Hence and by Lemma~\ref{descent-2}, we get a retraction
$$ r_{k,i_{1},\ldots,i_{k}}: X_{k,i_{1},\ldots,i_{k}} \to
   (A^{\tau_{k}} \cup V(\psi_{k})) \cap X_{k,i_{1},\ldots,i_{k}}.  $$
As before, we get a retraction
$$ \tilde{r}_{k-1,i_{1},\ldots,i_{k-1}}:
   \sigma_{k}^{-1}(X_{k-1,i_{1},\ldots,i_{k-1}}) \to (A^{\tau_{k}} \cup V(\psi_{k})) \cap
   \sigma_{k}^{-1}(X_{k-1,i_{1},\ldots,i_{k-1}}); $$
we have $V(\psi_{k})) = \sigma_{k}^{-1}(V(\psi_{k-1})) \cup
V(\chi_{k})$.

\vspace{1ex}

We shall now proceed with induction on the dimension $N$ of the
ambient space. By Lemma~\ref{ind-var}, we can find a retraction
$$ (\sigma_{k}^{-1}(V(\psi_{k-1})) \cup V(\chi_{k})) \cap \sigma_{k}^{-1}(X_{k-1,i_{1},\ldots,i_{k-1}})
   \to $$
$$ \sigma_{k}^{-1}(V(\psi_{k-1})) \cup (A^{\tau_{k}} \cap V(\chi_{k}))
   \cap \sigma_{k}^{-1}(X_{k-1,i_{1},\ldots,i_{k-1}}).
$$
Since $\sigma_{k}$ is a homeomorphism over the complement of
$V(\psi_{k-1})$, the above two retractions along with
Lemma~\ref{descent-3} yield a retraction
$$ r_{k-1,i_{1},\ldots,i_{k-1}}: X_{k-1,i_{1},\ldots,i_{k-1}} \to (A^{\tau_{k-1}} \cup V(\psi_{k-1})) \cap
   X_{k-1,i_{1},\ldots,i_{k-1}}. $$

We can continue the reasoning along this pattern to eventually
achieve an $\mathcal{L}$-definable retraction $r_{0}: X_{0} \to A$
we are looking for.

\vspace{2ex}

We have thus proven Theorem~\ref{main} on the existence of
$\mathcal{L}$-definable retractions onto any closed
$\mathcal{L}$-definable subset $A$ of the closed unit ball $X_{0}
:= (K^{\circ})^{N}$.

\begin{remark}\label{rem-1}
The above results, both desingularization of terms and retractions
onto definable subsets of $X_{0} := (K^{\circ})^{N}$, will run
almost in the same way when the space $X_{0}$ is the open unit
ball $(K^{\circ \circ})^{N}$, the projective space
$\mathbb{P}^{n}(K)$ or the products $(K^{\circ})^{N} \times
\mathbb{P}^{n}(K)$ or $(K^{\circ \circ})^{N} \times
\mathbb{P}^{n}(K)$. Moreover, definable retractions will exist
when $X_{0}$ is the complement of a closed subvariety $Z$ in these
spaces. For the last assertion, one must require at the final
stage of the desingularization process that $\phi_{k}$,
$\omega_{k}$ and also the pre-image $\tau_{k}^{-1}(Z)$ be
simultaneous simple normal crossing divisors; this corresponds to
the passage from the projective space $\mathbb{P}^{N}(K)$ to
$K^{N}$ indicated in~\cite[Remark~2.10]{Now-4}. Next one must
apply the corresponding versions of Lemma~\ref{descent} along with
its two corollaries and of Lemma~\ref{ind-var}, which take into
account the subvariety $Z$; modifications and proofs of those
versions being straightforward.
\end{remark}

\vspace{1ex}

The foregoing proof of a special case of Theorem~\ref{main-2}
along with Remark~\ref{rem-1} establish its full version, and thus
the proof is complete.   \hspace*{\fill} $\Box$

\vspace{2ex}

We conclude with the following comments.

\begin{remark}
The existence of definable retractions onto closed definable subsets yields a non-Archimedean definable analogue of the Dugundji theorem on the existence of a linear (and continuous) extender and, a fortiori, a non-Archimedean definable analogue of the Tietze--Urysohn extension theorem. These issues in the
algebraic case were discussed in our previous paper~\cite{Now-4}.
\end{remark}

\begin{remark}
The non-Archimedean extension problems for definable functions require a different approach and other techniques in comparison with the classical non-Archi\-medean purely topological ones because, among others, geometry over non-locally compact Henselian valued fields suffers from lack of definable Skolem functions. On the other hand, definability in a suitable language often makes the subject under study tamer and enables application of new tools and techniques. Let us finally emphasize that our interest to problems of non-Archimedean geometry was inspired by our joint paper~\cite{K-N}.
\end{remark}

\begin{remark}
In our subsequent paper~\cite{Now-5}, we establish Theorem~\ref{main-2} in the general settings of Henselian valued fields with analytic structure. Its proof relies on the definable version of canonical desingularization developed in that paper (and carried out within a category of definable, {\em strong} analytic manifolds and maps). Note that the theory of Henselian fields with analytic structure, unifying many earlier approaches within non-Archimedean analytic geometry, was developed in the papers~\cite{L-R,C-Lip-R,C-Lip-0,C-Lip}.
\end{remark}

\vspace{2ex}

\vspace{3ex}

\begin{small}
Institute of Mathematics

Faculty of Mathematics and Computer Science

Jagiellonian University


ul.~Profesora S.\ \L{}ojasiewicza 6

30-348 Krak\'{o}w, Poland

{\em E-mail address: nowak@im.uj.edu.pl}
\end{small}

\end{document}